\documentclass[11pt, a4paper]{amsart}

\usepackage[T1]{fontenc}
\usepackage[utf8]{inputenc}
\usepackage{lmodern}
\usepackage{mathtools,amssymb}
\usepackage{enumitem}
\usepackage{microtype}
\usepackage[colorlinks=true,linkcolor=blue,citecolor=blue,urlcolor=blue]{hyperref}
\usepackage[a4paper,margin=1in]{geometry}

\theoremstyle{definition}
\newtheorem{definition}{Definition}[section]

\theoremstyle{plain}
\newtheorem{theorem}[definition]{Theorem}

\newtheorem{lemma}[definition]{Lemma}

\theoremstyle{remark}
\newtheorem*{remark}{Remark}

\title
[A c.e.\ \(tt\)-Degree Without c.e.\ Irreducible \(m\)-Degrees]
{A Computably Enumerable \(tt\)-Degree\\
Without Computably Enumerable Irreducible \(m\)-Degrees}

\author{Patrizio Cintioli}
\address{Mathematics Division, School of Science and Technology, University of Camerino, Italy}
\email{patrizio.cintioli@unicam.it}

\subjclass[2020]{Primary 03D25; Secondary 03D30}
\keywords{Computably enumerable sets, truth-table degrees, many-one degrees, irreducible many-one degrees, strong reducibilities, pm-reducibility, cylinders, simple sets, semirecursive sets}

\begin{document}

\maketitle

\begin{abstract}
In this paper, we provide a negative solution to Problem 3 formulated by P.~Odifreddi in his survey articles \textit{``Strong Reducibilities''} (1981) and \textit{``Reducibilities''} (1999). The problem asks whether every computably enumerable (c.e.) $tt$-degree contains a c.e.\ \textit{irreducible} $m$-degree (i.e., an $m$-degree consisting of only one $1$-degree). We answer this question in the negative by proving the existence of a c.e.\ $tt$-degree that does not contain any c.e.\ irreducible $m$-degree. Our proof relies on the structural properties of c.e.\ semirecursive sets with a rigid complement, originally constructed by A.~N.~Degtev. We show that the unique c.e.\ $m$-degree contained within the $tt$-degree of such a set consists of simple sets, which cannot be cylinders, and therefore necessarily splits into multiple $1$-degrees. Furthermore, our result demonstrates that a classical 1969 theorem by C.~G.~Jockusch Jr.---which guarantees the existence of an irreducible $m$-degree within every c.e.\ $tt$-degree---is strictly optimal and cannot be generally strengthened to require such an $m$-degree to be computably enumerable.
\end{abstract}

\section{Introduction}

The classification of computably enumerable (c.e.) sets and the study of relative computability rely heavily on the notion of reducibilities. While Turing reducibility ($\le_T$) captures the general concept of algorithmic relative computability, stronger reducibilities---such as truth-table ($\le_{tt}$), many-one ($\le_m$), and one-one ($\le_1$) reducibilities---provide a finer framework for analyzing the structural properties of c.e.\ sets. The relationships between these degree structures have been extensively studied since the foundational work of E.~L.~Post~\cite{Post1944}.

A recurring theme in Computability Theory is understanding how a coarser degree splits into finer degrees. A well-known concept in this context is that of an \textit{irreducible $m$-degree}, defined as an $m$-degree that contains exactly one $1$-degree~\cite{Downey1993}. In 1969, C.~G.~Jockusch Jr.~\cite{Jockusch1969} and, independently, R.~I.~Soare~\cite{Soare1969} investigated the inner structure of these degrees. They demonstrated that every c.e.\ Turing degree contains a c.e.\ irreducible $m$-degree. Furthermore, Jockusch~\cite{Jockusch1969} established that every c.e.\ $tt$-degree contains an irreducible $m$-degree. He achieved this by considering the $tt$-cylinder of a c.e.\ set. However, because evaluating a truth-table condition on a $tt$-cylinder generally requires negative queries to the oracle, the resulting $tt$-cylinder of a c.e.\ non-computable set is fundamentally not computably enumerable. Thus, Jockusch's method could not guarantee the existence of an irreducible $m$-degree that is \textit{simultaneously} computably enumerable within an arbitrary c.e.\ $tt$-degree.

This subtle but crucial gap left open the question of whether both properties could be simultaneously satisfied. P.~Odifreddi formulated the question explicitly, which became known as Problem 3 in his renowned lists of open problems regarding strong reducibilities (see \textit{``Strong Reducibilities''} (1981)~\cite{Odifreddi1981} and \textit{``Reducibilities''} (1999)~\cite{Odifreddi1999}):

\medskip
\noindent\textbf{Problem 3}~\cite{Odifreddi1981, Odifreddi1999}. \textit{``Does every r.e.\ $tt$-degree contain an r.e.\ $m$-degree consisting of only one $1$-degree?''}

\smallskip
\noindent\textit{(Note: While the original quote uses the historical term ``recursively enumerable'' (r.e.), throughout this paper we adopt the modern standard terminology ``computably enumerable'' (c.e.). Additionally, an $m$-degree consisting of only one $1$-degree is commonly referred to in the literature as an \textbf{irreducible} $m$-degree~\cite{Downey1993}.)}
\medskip

In this paper, we provide a definitive negative solution to Odifreddi's Problem 3. We prove that the combination of properties requested by the problem cannot be universally achieved. To establish this, we analyze a c.e.\ semirecursive set with a rigid complement, first constructed by A.~N.~Degtev~\cite{Degtev1972}. Degtev proved that the c.e.\ $tt$-degree of such a set undergoes a drastic structural collapse, containing exactly one c.e.\ $m$-degree~\cite{Degtev1973}. By formally demonstrating that a c.e.\ set with a rigid complement is necessarily a simple set, and proving that a simple set can never be a cylinder, we deduce that this unique c.e.\ $m$-degree must split into multiple $1$-degrees (i.e., it is not irreducible). Consequently, the given c.e.\ $tt$-degree contains no c.e.\ irreducible $m$-degree.

Beyond resolving Odifreddi's Problem 3, this result reveals a profound historical corollary: Jockusch's 1969 theorem for $tt$-degrees is strictly optimal. The existence of an irreducible $m$-degree within a c.e.\ $tt$-degree is guaranteed, but one cannot generally demand that such an $m$-degree be computably enumerable.

The paper is organized as follows. In Section~\ref{sec:preliminary}, we review the preliminary definitions of cylinders, simple sets, and rigid complements. In Section~\ref{sec:preparatory_lemmas}, we develop the necessary preparatory lemmas characterizing the relationships between these classes. Finally, in Section~\ref{sec:main_theorem}, we combine these lemmas with Degtev's structural results to prove our Main Theorem.

\section{Preliminary Definitions}\label{sec:preliminary}

We assume the reader is familiar with the standard notation and basic concepts of Computability Theory. For comprehensive background material, undefined notions, and an in-depth treatment of computably enumerable sets and strong reducibilities, we refer the reader to the classic monographs by Odifreddi~\cite{Odifreddi1989, Odifreddi1999_Vol2}, Rogers~\cite{Rogers1967}, and Soare~\cite{Soare1987}.

Throughout this paper, $\omega = \{0, 1, \ldots\}$ denotes the set of natural numbers. Given two sets $X$ and $Y$, their symmetric difference is denoted by $X \mathbin{\triangle} Y$ and is defined as $(X \setminus Y) \cup (Y \setminus X)$.

We briefly recall the standard strong reducibilities. For $X, Y \subseteq \omega$, $X$ is \textit{many-one reducible} to $Y$ ($X \le_m Y$) if there is a total computable function $f$ such that $x \in X \iff f(x) \in Y$ for all $x \in \omega$. If $f$ can be chosen to be injective, $X$ is \textit{one-one reducible} to $Y$ ($X \le_1 Y$). Furthermore, $X$ is \textit{truth-table reducible} to $Y$ ($X \le_{tt} Y$) if there exists a Turing reduction from $X$ to $Y$ that halts on all inputs regardless of the oracle. For any reducibility $r \in \{1, m, tt\}$, the equivalence relation $X \equiv_r Y$ (meaning $X \le_r Y$ and $Y \le_r X$) partitions the power set of $\omega$ into equivalence classes called \textbf{$r$-degrees}. An $r$-degree is called \textit{computably enumerable} (c.e.) if it contains at least one c.e.\ set. An $m$-degree is called \textbf{irreducible} if it contains exactly one $1$-degree.

\begin{definition}[Cylinder]\label{def:cylinder}
A set $C \subseteq \omega$ is called a \textbf{cylinder} if there exists a set $A$ such that $C \equiv_1 A \times \omega$, where $A \times \omega = \{\langle x, n \rangle : x \in A, n \in \omega\}$ and $\langle \cdot, \cdot \rangle$ is a standard bijective pairing function. It is a well-known fact that for any set $A$, the relation $A \equiv_m A \times \omega$ always holds.
\end{definition}

\begin{definition}[Simple Set]\label{def:simple_set}
A c.e.\ set $S$ is called \textbf{simple} if its complement $\overline{S}$ is infinite but contains no infinite c.e.\ subset.
\end{definition}

\begin{definition}[$pm$-reducibility and Rigid Complement]\label{def:rigid_complement}
Given two sets $X, Y \subseteq \omega$, $X$ is said to be \textbf{$pm$-reducible} to $Y$ ($X \le_{pm} Y$) if there exists a partial computable function $\psi$ such that for all $x \in \omega$: $x \in X \iff \psi(x)\!\downarrow \text{ and } \psi(x) \in Y$. Two sets are $pm$-equivalent ($X \equiv_{pm} Y$) if the reducibility holds in both directions.

A c.e.\ set $A$ has a \textbf{rigid complement} if its complement $\overline{A}$ is infinite and, for any two arbitrary subsets $X, Y \subseteq \overline{A}$, the equivalence $X \equiv_{pm} Y$ implies that their symmetric difference $X \mathbin{\triangle} Y$ is finite.
\end{definition}

\section{Preparatory Lemmas}\label{sec:preparatory_lemmas}

In this section, we establish three structural lemmas that form the logical foundation of our main result. The overarching strategy is to demonstrate a strict incompatibility between the structural requirements of an irreducible $m$-degree and the properties of c.e.\ sets with a rigid complement.

To this end, we first characterize irreducible $m$-degrees in terms of cylinders (Lemma~\ref{lemma:cylinders}). Next, we show that a simple set can never be a cylinder (Lemma~\ref{lemma:cylinder_not_simple}). Finally, we bridge these concepts by proving that the strong condition of having a rigid complement strictly forces a c.e.\ set to be simple (Lemma~\ref{lemma:rigid_implies_simple}). Together, these results provide the exact mathematical machinery needed to prove the Main Theorem.

\begin{lemma} \label{lemma:cylinders}
An $m$-degree $\mathbf{m}$ is irreducible if and only if every set $A \in \mathbf{m}$ is a cylinder.
\end{lemma}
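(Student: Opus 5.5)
The plan is to prove the two directions separately, using two classical facts about cylinders. First, for any set $A$ the set $A \times \omega$ is a cylinder, and $A \equiv_m A\times\omega$, as already noted after Definition~\ref{def:cylinder}. Second, cylinders have the property that $X \le_m C$ implies $X \le_1 C$ whenever $C$ is a cylinder. I would recall the proof of the second fact briefly: if $f$ witnesses $X \le_m A\times\omega$, define $g(x) = \langle (f(x))_0, \langle (f(x))_1, x\rangle\rangle$. This map is injective, and it reduces $X$ to $A\times\omega$. Composing with the $1$-equivalence between $C$ and $A\times\omega$ gives an injective reduction to $C$.

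\emph{($\Leftarrow$)} Suppose every $A \in \mathbf{m}$ is a cylinder, and take $A, B \in \mathbf{m}$. Since $A \le_m B$ and $B$ is a cylinder, the second fact gives $A \le_1 B$. Symmetrically, $B \le_1 A$, so $A \equiv_1 B$. Hence $\mathbf{m}$ consists of a single $1$-degree, i.e.\ it is irreducible.

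\emph{($\Rightarrow$)} Suppose $\mathbf{m}$ is irreducible and let $A \in \mathbf{m}$. Then $A \times \omega \equiv_m A$, so $A\times\omega \in \mathbf{m}$. By irreducibility, $A \equiv_1 A\times\omega$, which is exactly the definition of $A$ being a cylinder.

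There is no real obstacle here. The only step needing care is the injectivity trick in the second fact: one encodes the input $x$ into the second coordinate, which preserves membership because $A\times\omega$ ignores that coordinate. The composition with the $1$-equivalence then uses the fact that $\le_1$ is transitive, since a composition of injections is injective.
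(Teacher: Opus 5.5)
Your proof is correct and follows the paper's argument: the forward direction is identical, and the converse rests on the same fact that $m$-equivalent cylinders are $1$-equivalent. The only difference is that you prove this fact directly with the padding map $x \mapsto \langle (f(x))_0, \langle (f(x))_1, x\rangle\rangle$. The paper instead cites it under the name of Myhill's Isomorphism Theorem, although that theorem only supplies the further step from $1$-equivalence to computable isomorphism.
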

\begin{proof}
Let $A \in \mathbf{m}$. Since universally $A \equiv_m A \times \omega$, the set $A \times \omega$ necessarily belongs to $\mathbf{m}$. In order for the $m$-degree $\mathbf{m}$ to be irreducible (i.e., to coincide with a single $1$-degree), all its elements must be mutually $1$-equivalent. Consequently, $A \equiv_1 A \times \omega$ must hold, which is the exact definition of a cylinder.

The converse implication follows from Myhill's Isomorphism Theorem: two $m$-equivalent cylinders are always $1$-equivalent (and computably isomorphic).
\end{proof}

\begin{lemma} \label{lemma:cylinder_not_simple}
No cylinder with a non-empty complement can be a simple set.
\end{lemma}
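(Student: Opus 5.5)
We argue by contradiction: suppose $C$ is a cylinder with $\overline{C} \neq \emptyset$ that is also simple. By Definition~\ref{def:cylinder} there is a set $A$ and a computable bijection, or at least a one-one reduction, $f$ with $A \times \omega \le_1 C$ via $f$, i.e.\ $\langle x,n\rangle \in A\times\omega \iff f(\langle x,n\rangle) \in C$, with $f$ injective. Since $C$ is simple it is c.e., and $A \times \omega \le_1 C$ makes $A \times \omega$ c.e.\ as well. Hence $A$ is c.e., because $x \in A \iff \langle x,0\rangle \in A\times\omega$.

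Next we show $\overline{A} \neq \emptyset$. The reverse reduction $C \le_1 A\times\omega$ sends a point $y \in \overline{C}$ to some $\langle x,n\rangle \notin A\times\omega$. Then $x \notin A$, so we may fix some $x_0 \in \overline{A}$.

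Now consider the computable set $R = \{\langle x_0, n\rangle : n \in \omega\}$. It is infinite and disjoint from $A \times \omega$. Its image $f(R)$ is therefore contained in $\overline{C}$. Because $f$ is injective, $f(R)$ is infinite, and it is c.e.\ as the image of a computable set under a computable function. So $\overline{C}$ contains an infinite c.e.\ subset, contradicting the simplicity of $C$ (Definition~\ref{def:simple_set}).

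There is no real obstacle here. The only points needing care are the following.
\begin{enumerate}
\item We need the reduction in the direction $A\times\omega \le_1 C$ to be injective, so that infinitude transfers to $f(R)$.
\item We need the reverse reduction to guarantee a column of $\overline{A\times\omega}$ exists, and this is exactly where the hypothesis $\overline{C}\neq\emptyset$ is used.
\end{enumerate}
Note that we never actually need $A$ to be c.e.; that observation can be dropped.
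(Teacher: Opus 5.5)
Your proof is correct and follows the paper's argument: you push the column $\{a\}\times\omega \subseteq \overline{A\times\omega}$ through an injective computable map into $\overline{C}$, which yields an infinite c.e.\ subset of $\overline{C}$. The differences are minor: you use the one-one reduction $A\times\omega\le_1 C$ directly instead of invoking Myhill's Isomorphism Theorem, and you justify $\overline{A}\neq\emptyset$ via the reverse reduction, a step the paper simply asserts ``by definition''.
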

\begin{proof}
Let $C$ be a cylinder such that $\overline{C} \neq \emptyset$. By definition, there exists a set $A \neq \omega$ such that $C \equiv_1 A \times \omega$. 
By Myhill's Isomorphism Theorem, two $1$-equivalent sets are computably isomorphic. Therefore, there exists a computable permutation $h \colon \omega \to \omega$ (a total computable bijection with a computable inverse) such that $h(A \times \omega) = C$. Being a bijection, $h$ maps the complement exactly onto the complement: $h(\overline{A \times \omega}) = \overline{C}$.

Since $A \neq \omega$, there exists at least one element $a \notin A$. The set $R = \{a\} \times \omega$ is clearly an infinite and computable (hence c.e.) set, and it is entirely contained within $\overline{A \times \omega} = \overline{A} \times \omega$.

Since $h$ is computable and injective, its image $h(R)$ is also an infinite c.e.\ set. Given that $h(R) \subseteq \overline{C}$, we have identified an infinite c.e.\ set within the complement of $C$. By definition, this prevents $C$ from being a simple set.
\end{proof}

\begin{lemma} \label{lemma:rigid_implies_simple}
If a c.e.\ set $A$ has a rigid complement, then $A$ is a simple set.
\end{lemma}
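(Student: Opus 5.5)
We argue by contradiction. Since the definition of a rigid complement already requires $\overline{A}$ to be infinite, it only remains to show that $\overline{A}$ contains no infinite c.e.\ subset. Suppose, toward a contradiction, that $W \subseteq \overline{A}$ is an infinite c.e.\ set. Replacing $W$ by an infinite computable subset, we may take $W$ computable; then we fix a computable strictly increasing enumeration $w_0 < w_1 < w_2 < \cdots$ of $W$.

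The plan is to find two subsets $X, Y \subseteq \overline{A}$ with $X \equiv_{pm} Y$ but $X \mathbin{\triangle} Y$ infinite, which contradicts rigidity. The natural choice is
\[
X = \{w_{2n} : n \in \omega\}, \qquad Y = \{w_{2n+1} : n \in \omega\}.
\]
Both are computable, infinite and disjoint, and both lie inside $W \subseteq \overline{A}$. Hence $X \mathbin{\triangle} Y = X \cup Y = W$ is infinite.

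Next we check $X \le_{pm} Y$. Define the partial computable function $\psi$ by $\psi(x) = w_{2n+1}$ if $x = w_{2n}$, and $\psi(x)\!\uparrow$ otherwise. Because $W$ and its enumeration are computable, we can decide whether $x = w_{2n}$ and, if so, find $n$; so $\psi$ is partial computable, and in fact total computable if we instead send non-members of $X$ to a fixed element outside $Y$. Then $x \in X$ if and only if $\psi(x)\!\downarrow$ and $\psi(x) \in Y$. The symmetric map $w_{2n+1} \mapsto w_{2n}$ gives $Y \le_{pm} X$, so $X \equiv_{pm} Y$. Rigidity then forces $X \mathbin{\triangle} Y$ to be finite, a contradiction. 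Therefore $\overline{A}$ has no infinite c.e.\ subset, and $A$ is simple.

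The only mildly delicate step is the reduction to a computable $W$ with an increasing enumeration, which is what makes $\psi$ well defined and computable. This uses the standard fact that every infinite c.e.\ set contains an infinite computable subset: take the elements that appear in the enumeration with values larger than all earlier ones. Alternatively, we can avoid that fact by using a one-one computable enumeration $g$ of $W$ and setting $\psi(g(2n)) = g(2n+1)$, which is partial computable by searching the enumeration. Everything else is routine.
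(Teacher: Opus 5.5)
Your proof is correct and follows essentially the same argument as the paper: you split an infinite c.e.\ subset of $\overline{A}$ into two disjoint infinite halves, show they are $pm$-equivalent, and contradict rigidity because their symmetric difference is the whole infinite set. The only difference is cosmetic: the paper splits $W$ directly via a repetition-free enumeration and reduces $W_1$ to $W_2$ by the constant map onto a fixed element $w_2 \in W_2$, defined exactly on $W_1$, so neither your passage to a computable subset nor your matching $w_{2n} \mapsto w_{2n+1}$ is needed.
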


\begin{proof}
By the very definition of a rigid complement (Definition~\ref{def:rigid_complement}), the complement $\overline{A}$ is infinite.

We proceed by contradiction, assuming that $A$ is not simple. Then $\overline{A}$ must contain an infinite c.e.\ subset $W$.

Since $W$ is c.e.\ and infinite, we can effectively enumerate and partition it into two disjoint infinite c.e.\ sets, $W_1$ and $W_2$ (for instance, given an effective enumeration of $W$ without repetitions, we can place the elements enumerated at even steps into $W_1$ and those at odd steps into $W_2$). Both are clearly subsets of $\overline{A}$.

We formally prove that $W_1 \le_{pm} W_2$. Let us fix an arbitrary element $w_2 \in W_2$ (which is always possible since $W_2$ is infinite and thus non-empty).
We define the partial function $\psi(x)$ as follows: we start the enumeration of $W_1$; if and when $x$ appears in the enumeration, the function halts and outputs $\psi(x) = w_2$. Otherwise, the search continues indefinitely ($\psi(x)\!\uparrow$).
Since $W_1$ is c.e., this procedure defines a valid algorithm, making $\psi$ a partial computable function.

It is immediately verified that:
\begin{itemize}
    \item if $x \in W_1$, then $\psi(x)\!\downarrow$ and $\psi(x) = w_2 \in W_2$;
    \item if $x \notin W_1$, the function diverges, hence $\psi(x)\!\uparrow$.
\end{itemize}
This perfectly satisfies the condition $x \in W_1 \iff \psi(x)\!\downarrow \land \; \psi(x) \in W_2$, proving that $W_1 \le_{pm} W_2$. 
By symmetry, $W_2 \le_{pm} W_1$ also holds, demonstrating that $W_1 \equiv_{pm} W_2$.

By the rigidity of the complement of $A$, any two subsets of $\overline{A}$ that are $pm$-equivalent must have a finite symmetric difference. However, since $W_1$ and $W_2$ are disjoint by construction, their symmetric difference is their union $W_1 \mathbin{\triangle} W_2 = W_1 \cup W_2 = W$. Since $W$ is infinite, we reach a clear contradiction.

It follows that the initial assumption was false: $\overline{A}$ cannot contain infinite c.e.\ sets, therefore $A$ is a simple set.
\end{proof}

\section{Main Theorem}\label{sec:main_theorem}

We are now ready to state and prove our main result. The proof synthesizes the structural properties established in the previous section with a powerful theorem concerning the collapse of strong degrees discovered by A.~N.~Degtev. By isolating a specific c.e.\ $tt$-degree in which the internal hierarchy of c.e.\ $m$-degrees collapses to a single equivalence class, we provide a definitive negative solution to Odifreddi's Problem 3.

\begin{theorem}[Solution to Problem 3]
There exists a computably enumerable $tt$-degree that does not contain any computably enumerable irreducible $m$-degree.
\end{theorem}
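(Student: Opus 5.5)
The plan is to take a set $A$ as in Degtev~\cite{Degtev1972}: a c.e.\ semirecursive set with rigid complement. Let $\mathbf{a}$ be its $tt$-degree, which is c.e.\ because $A$ is c.e. The argument then proceeds in four steps.

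\textbf{Step 1.} I invoke Degtev's collapse theorem~\cite{Degtev1973}. It says that $\mathbf{a}$ contains exactly one c.e.\ $m$-degree, and this must be the $m$-degree $\mathbf{m}$ of $A$ itself. So any c.e.\ $m$-degree inside $\mathbf{a}$ equals $\mathbf{m}$, and it suffices to show that $\mathbf{m}$ is not irreducible.

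\textbf{Step 2.} By Lemma~\ref{lemma:rigid_implies_simple}, $A$ is simple. In particular $\overline{A}\neq\emptyset$, since $\overline{A}$ is infinite by the definition of rigid complement.

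\textbf{Step 3.} By Lemma~\ref{lemma:cylinder_not_simple}, $A$ is not a cylinder.

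\textbf{Step 4.} By Lemma~\ref{lemma:cylinders}, an irreducible $m$-degree consists only of cylinders. Since $A\in\mathbf{m}$ is not a cylinder, $\mathbf{m}$ is not irreducible. To make this concrete: $A\times\omega\in\mathbf{m}$, yet $A\not\equiv_1 A\times\omega$, so $\mathbf{m}$ contains at least two $1$-degrees.

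Putting the steps together, every c.e.\ $m$-degree contained in $\mathbf{a}$ coincides with $\mathbf{m}$, and $\mathbf{m}$ is not irreducible. Hence $\mathbf{a}$ contains no c.e.\ irreducible $m$-degree, which proves the theorem.

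The main obstacle is Step 1, because it relies entirely on the external result of Degtev. Two things must be checked carefully. First, Degtev's hypotheses (c.e., semirecursive, rigid complement) must be exactly the ones satisfied by the set from~\cite{Degtev1972}. Second, the conclusion must really be that every c.e.\ set $tt$-equivalent to $A$ is $m$-equivalent to $A$, not merely a statement about sets $m$-below $A$. If the cited theorem turns out to be phrased differently, I would first try to recover the needed form from semirecursiveness, which for semirecursive sets lets one convert $tt$-reductions between c.e.\ sets into $m$-reductions. I would then use rigidity to exclude other c.e.\ $m$-degrees.

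Steps 2 through 4 are immediate from the preparatory lemmas.
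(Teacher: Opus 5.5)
Your proposal is correct and follows the paper's proof step for step. Both take Degtev's c.e.\ semirecursive set $A$ with rigid complement, cite Degtev's theorem that the only c.e.\ $m$-degree in its $tt$-degree is that of $A$, and then apply Lemmas~\ref{lemma:rigid_implies_simple}, \ref{lemma:cylinder_not_simple} and \ref{lemma:cylinders}; your witness $A\times\omega\in\mathbf{m}$ with $A\not\equiv_1 A\times\omega$ just unpacks the last lemma.

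The one inaccuracy is in your contingency remark, which the proof does not need. Semirecursiveness collapses only \emph{positive} $tt$-reductions to $m$-reductions, not arbitrary ones. For instance, the c.e.\ semirecursive left cut of $\sum_{n\in K}2^{-n-1}$ is $tt$-complete but not $m$-complete. So the rigid complement is essential to Degtev's collapse, and the result should simply be cited, as the paper does.
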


\begin{proof}
We rely on a sequence of celebrated results by A.~N.~Degtev. First, Degtev~\cite{Degtev1972} constructively proved the existence of a semirecursive c.e.\ set $A$ with a rigid complement. Subsequently, in his 1973 paper~\cite[Corollary 2.2]{Degtev1973} (also mentioned by Odifreddi~\cite{Odifreddi1999} in the context of Problem 9), he demonstrated a fundamental property of such sets: the c.e.\ $tt$-degree of $A$ (which we denote by $\mathbf{a}_{tt}$) contains exactly one c.e.\ $m$-degree (which we denote by $\mathbf{m}_A$, the degree to which the set $A$ obviously belongs).

Let us analyze the structural properties of this unique $m$-degree $\mathbf{m}_A$:
\begin{enumerate}
    \item Since the set $A$ has a rigid complement, by Lemma~\ref{lemma:rigid_implies_simple}, $A$ is a \textit{simple set}.
    \item Since $A$ is simple, by Lemma~\ref{lemma:cylinder_not_simple}, $A$ \textit{is not a cylinder}.
    \item Since the set $A \in \mathbf{m}_A$ is not a cylinder, it follows by Lemma~\ref{lemma:cylinders} that the $m$-degree $\mathbf{m}_A$ \textit{is not irreducible}. 
\end{enumerate}

By Degtev's theorem, $\mathbf{m}_A$ is the \textit{only} available c.e.\ $m$-degree within the entire $tt$-degree $\mathbf{a}_{tt}$. Having just shown that it is not irreducible, we unequivocally deduce that no c.e.\ irreducible $m$-degree exists in $\mathbf{a}_{tt}$.

This provides a rigorous proof and a negative answer to Odifreddi's Problem 3.
\end{proof}

\begin{remark}
Our negative solution to Odifreddi's Problem 3 provides an interesting corollary regarding a classical result by C.~G.~Jockusch Jr.~\cite{Jockusch1969}. Jockusch proved that every c.e.\ $tt$-degree contains an irreducible $m$-degree. Our result demonstrates that Jockusch's theorem is strictly optimal and cannot be generally strengthened to require such an $m$-degree to be computably enumerable. Indeed, within the specific c.e.\ $tt$-degree $\mathbf{a}_{tt}$ considered in our proof, the irreducible $m$-degree guaranteed by Jockusch's theorem necessarily exists, but it cannot be computably enumerable, since the unique c.e.\ $m$-degree $\mathbf{m}_A$ available in $\mathbf{a}_{tt}$ does not possess this property.
\end{remark}

\section*{Acknowledgments}

This work is the result of an extended human--AI interaction. 
Several structural ideas and technical arguments emerged from exploratory sessions with the AI-based reasoning system Gemini Deep Think (Google DeepMind).
The author has fully reworked and verified all arguments and bears sole responsibility for their correctness.

\end{document}